\documentclass[11pt]{article}
\usepackage[a4paper,margin=3cm]{geometry}
\usepackage{amsmath,amssymb,amsthm,mathtools}
\usepackage{enumitem}
\usepackage{hyperref}
\hypersetup{colorlinks=true,linkcolor=blue,urlcolor=blue,citecolor=blue}

\newtheorem{theorem}{Theorem}[section]
\newtheorem{lemma}[theorem]{Lemma}
\newtheorem{proposition}[theorem]{Proposition}
\newtheorem{corollary}[theorem]{Corollary}

\DeclareMathOperator{\spanlin}{span}
\DeclareMathOperator{\UCP}{UCP}
\DeclareMathOperator{\Ext}{Ext}
\newcommand{\K}{\mathcal K}
\newcommand{\B}{\mathcal B}
\newcommand{\Q}{\mathcal Q}
\newcommand{\HH}{\mathcal H}
\newcommand{\LL}{\mathcal L}
\newcommand{\F}{\mathbb F}
\newcommand{\C}{\mathbb C}
\newcommand{\N}{\mathbb N}
\newcommand{\id}{\operatorname{id}}

\title{A Three-Dimensional Operator System without the Smith--Ward Property}
\author{Marcel Scherer}

\date{\today}

\begin{document}
\maketitle

\begin{abstract}
Harris recently showed that a non-liftable injective representation into the Calkin algebra gives explicit
four-dimensional operator systems in the Calkin algebra without the lifting
property, and hence a counterexamples to the generalized Smith--Ward problem for four-dimensional operator systems. The main obstruction also appears in an earlier work by Paulsen on this problem. We isolate the relevant part of this argument and replace the four-dimensional
operator system by a three-dimensional hyperrigid operator system inside a matrix
amplification of
\[
        C_r^*(\F_2).
\]
The resulting operator system in the Calkin algebra is of the form
\(\spanlin\{1,q(D),q(K)\}\), where $D$ and $K$ are selfadjoint operators, and the identity map on this operator system has no unital completely
positive lift. Equivalently, the operator $D+iK$ gives a counterexample to
the Smith--Ward problem.\\ 
By a result of Kavruk, the dual of this operator system fails to be exact, and hence is the first example of a three-dimensional operator system that is not exact.
\end{abstract}

\let\thefootnote\relax\footnotetext{
2020 \textit{Mathematics Subject Classification.} 46L07.\\
\ \textit{Key words and phrases.} Smith-Ward problem, lifting property, hyperrigidity, $C^*$-nuclear.\\
 The author was supported by a Technion Fellowship.}

\section{Introduction}

Let \(\HH\) be a separable infinite-dimensional Hilbert space and let
\[
        q:\B(\HH)\longrightarrow \Q(\HH):=\B(\HH)/\K(\HH)
\]
be the quotient map onto the Calkin algebra.  The Smith--Ward theorem says that,
for an operator \(T\in\B(\HH)\) and a fixed \(N\in\N\), the first \(N\)
essential matrix ranges of \(T\) can be realized exactly as the first \(N\)
matrix ranges of a compact perturbation of \(T\) \cite{SmithWard}.  The
classical Smith--Ward problem asks whether this compact perturbation can be
chosen independently of \(N\).\\
As shown in \cite{Paul82}, the Smith--Ward
problem is equivalent to the lifting problem for
three-dimensional operator systems of the form
\[
        \spanlin\{q(I),q(T),q(T)^*\}\subseteq \Q(\HH),
\]
that is, whether the identity map on this operator system has a u.c.p. lift to $\mathcal{B}(\HH)$. More generally, for an operator system $S\subset\mathcal{B}(\HH)$, the generalized Smith--Ward problem asks whether the identity map on $q(S)$ has a u.c.p. lift to $\mathcal{B}(\HH)$.\\
In \cite{Paul82}, it was shown that there exists a $5$-dimensional operator system in $\Q(\HH)$ without the lifting property, and this was improved to a $4$-dimensional operator system in \cite{Kavruk}.
Harris gives an explicit
four-dimensional example: from a non-liftable injective representation into the Calkin algebra, one obtains a
four-dimensional operator subsystem of a matrix amplification whose identity map
has no u.c.p. lift, see \cite[Theorem 2 and
Corollary 9]{Harris}.  His construction, as well as the construction in \cite[Theorem 3.3]{Paul82}, is based on a multiplicative-domain argument. \\
The purpose of this note is to replace Harris' four-dimensional operator system by
a three-dimensional one.  The new point is to construct a hyperrigid operator system
\[
        S=\spanlin\{1,D,K\}\subseteq M_4(C_r^*(\F_2)),
\]
where \(D\) is a diagonal selfadjoint matrix with four distinct eigenvalues and
\(K\) is a selfadjoint matrix whose non-zero entries are labelled by the two
canonical unitaries of \(C^*_r(\mathbb{F}_2)\) and by the unit.\\
Although Arveson's original definition of hyperrigidity in \cite{Arve1} is phrased in
terms of approximation by u.c.p. maps, we use the equivalent
unique-extension formulation: for every unital representation \(\pi\) of
\(C^*(S)\), the restriction \(\pi|_S\) has a unique u.c.p. extension to
\(C^*(S)\), namely \(\pi\) itself. In this situation, one
says that \(\pi|_S\) has the \textit{unique extension property}.
The proof that $S$ is hyperrigid starts by taking a u.c.p. extension of $\pi|_S$ to $C^*(S)$ and dilating the extension via Stinespring to a unital representation $\rho$ of $C^*(S)$. Then we will show that the dilation $\rho|_S$ of $\pi|_S$ is already \textit{trivial}, meaning that if $\pi:S\to\mathcal{B}(\HH)$, $\rho:S\to B(\mathcal L)$ and $V:H\to \mathcal L$ is an isometry with 
  \[
    \pi(s)=V^*\rho(s) V, \qquad s\in S
  \]
then $V\pi(s)=\rho(s)V$ for all $s\in S$. See \cite{Arve2} for more  details on the relation between unique extensions and trivial dilations.\\
Combining this three-dimensional hyperrigid operator system with Harris' obstruction to the Smith--Ward property gives the following result.

\vskip 6 pt
\noindent {\bf Theorem \ref{thm: main}.} \textit{There exists an operator \(T\in\B(\HH)\) such that no compact operator \(L\in\K(\HH)\) satisfies
\[
        \mathcal W(T+L)
        =
        \mathcal W(q(T)),
\]
where \(\mathcal W\) denotes the full joint matrix range.}
\vskip 6 pt

\section{Preliminaries}

For a tuple \(a=(a_1,\ldots,a_m)\) in a unital \(C^*\)-algebra, write
\[
        \mathcal W_n(a)
        =
        \{(\phi(a_1),\ldots,\phi(a_m)):\phi\in\UCP(C^*(a),M_n)\}
\]
for its \(n\)-th joint matrix range and
\(\mathcal W(a)=\bigsqcup_{n\geq 1}\mathcal W_n(a)\) for the full joint matrix
range.  We use the standard matrix-range duality, see for example \cite[Theorem 5.1]{DDSS17}: for operator systems generated by tuples \(a\)
and \(b\), inclusion of full matrix ranges is equivalent to the existence of a
unital completely positive map sending the generators \(a_i\) to \(b_i\).  In
particular, if
\[
        \mathcal W(a)=\mathcal W(b),
\]
then the unital linear map 
  \[
    \textup{span}\{1,a_1,\dots,a_m\}\to\textup{span}\{1,b_1,\dots,b_m\}, a_i\mapsto b_i
  \]
is completely positive and bijective with completely positive inverse.\\
We also use the following standard consequence of Stinespring dilation theory.
It is the Ext-theoretic obstruction underlying Harris' and Paulsen's constructions; we include
the proof to fix the direction of the argument.

\begin{lemma}
\label{lem:ucp-lift-invertible}
Let \(E\) be a unital simple separable \(C^*\)-algebra and let
\(\tau:E\to\Q(\HH)\) be a unital representation, where $\HH$ is separable.  If \(\tau\) has a
u.c.p. lift \(\sigma:E\to\B(\HH)\), i.e. \(q\circ\sigma=\tau\), then the class
\([\tau]\in\Ext(E)\) is invertible.
\end{lemma}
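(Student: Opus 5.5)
Since $\sigma$ is a u.c.p. map on the separable unital $C^*$-algebra $E$, I will apply Stinespring's theorem to get a separable Hilbert space $\mathcal L$, a unital representation $\rho:E\to\B(\mathcal L)$ and an isometry $V:\HH\to\mathcal L$ with $\sigma(x)=V^*\rho(x)V$. Write $P=VV^*$, identify $\HH$ with $P\mathcal L$, and decompose $\mathcal L=\HH\oplus\HH'$ with $\HH'=(1-P)\mathcal L$. Then
\[
\rho(x)=\begin{pmatrix}\sigma(x) & \rho_{12}(x)\\ \rho_{21}(x) & \rho_{22}(x)\end{pmatrix}.
\]
The key step is to show that the off-diagonal corners $\rho_{12}(x)=P\rho(x)(1-P)$ are compact. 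I use that $\tau=q\circ\sigma$ is multiplicative. Because $\rho$ is multiplicative, for all $x\in E$
\[
\sigma(x^*x)-\sigma(x)^*\sigma(x)=V^*\rho(x)^*(1-P)\rho(x)V=\rho_{21}(x)^*\rho_{21}(x).
\]
The left-hand side is compact since $q\circ\sigma$ is a $*$-homomorphism. Hence $\rho_{21}(x)$ is compact, and so is $\rho_{12}(x)=\rho_{21}(x^*)^*$. It follows that $x\mapsto q_{\mathcal L}(\rho(x))$ is the diagonal sum $\tau\oplus\tau'$, where $\tau'(x):=q(\rho_{22}(x))$ is a unital $*$-homomorphism into $\Q(\HH')$. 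Multiplicativity of $\tau'$ comes from compressing $\rho(xy)=\rho(x)\rho(y)$ to $\HH'$ and using that the off-diagonal terms are compact.

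Next I treat the degenerate cases. If $\HH'$ is finite-dimensional, then $\tau'$ lives in the zero algebra. In that case I replace $\rho$ by $\rho\oplus\rho_\infty$, where $\rho_\infty$ is an infinite ampliation of a faithful representation with no compact operators in its range, and $V$ by $V\oplus 0$. This keeps Stinespring's relation and makes $\HH'$ infinite-dimensional and separable. Then $[\tau]+[\tau']=[q\circ\rho]$ in $\Ext(E)$ after identifying $\mathcal L\cong\HH\oplus\HH'\cong\HH$.

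The extension $q\circ\rho$ comes from a genuine representation $\rho$ of $E$, so it is trivial (split). By simplicity of $E$, $\rho$ is injective, and no nonzero element of $\rho(E)$ is compact: otherwise $\rho^{-1}(\K)$ would be a nonzero proper ideal, and it cannot be all of $E$ since $\rho(1)=1$ is not compact in infinite dimensions. By Voiculescu's theorem, split unital extensions of a simple (separable, unital) algebra represent the neutral element of $\Ext(E)$. Hence $[\tau]+[\tau']=0$, so $[\tau]$ is invertible with inverse $[\tau']$.

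The main obstacle is the bookkeeping that makes $\Ext(E)$ sums well-defined. One must check that $\tau$ and $\tau'$ are injective, which holds by simplicity since both are unital, so they define elements of $\Ext(E)$. One must also handle infinite dimensionality of $\HH'$, which the ampliation trick handles. And one must invoke Voiculescu's theorem in the form that all trivial unital extensions are equivalent and neutral. I would cite Voiculescu / Arveson's ``Notes on extensions'' for this last step rather than reprove it.
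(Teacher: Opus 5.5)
Your proposal is correct and follows essentially the same route as the paper. Both arguments take a Stinespring dilation, get compactness of the off-diagonal corners from the multiplicativity defect of $\sigma$, and enlarge the dilation so that $(1-P)\mathcal L$ is infinite-dimensional. Both then read off $[\tau]+[\tau']=[q\circ\rho]=0$ because $q\circ\rho$ lifts to a genuine representation. You additionally spell out two points the paper leaves implicit: the separability of $\mathcal L$, and the appeal to Voiculescu's theorem for why a trivial extension is the neutral element.
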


\begin{proof}
Let
\[
        \sigma(e)=V^*\pi(e)V,\qquad e\in E,
\]
be a Stinespring dilation on a Hilbert space \(\LL\).  We identify \(\HH\) with
\(V\HH\subseteq\LL\), and let \(P\) be the projection onto this subspace.  By adding a separable infinite-dimensional summand to $\pi$, we can assume without loss of generality that $I-P$ is not compact.\\
Since
\(q\circ\sigma=\tau\) is multiplicative, we have for every
\(e\in E\),
\[
        \sigma(e^*e)-\sigma(e)^*\sigma(e)\in\K(\HH),
        \qquad
        \sigma(ee^*)-\sigma(e)\sigma(e)^*\in\K(\HH).
\]
In the Stinespring representation these defects are
\[
        P\pi(e)^*(1-P)\pi(e)P
        \quad\text{and}\quad
        P\pi(e)(1-P)\pi(e)^*P,
\]
so the off-diagonal corners
\((1-P)\pi(e)P\) and \(P\pi(e)(1-P)\) are compact. \\
Since $I-P$ is not compact, $\Q((1-P)\mathcal{L})\neq\{0\}$ and hence we may define a unital representation
\[
        \rho:E\to \Q((1-P)\mathcal{L}), e\mapsto q_{(1-P)\LL}((1-P)\pi(e)(1-P)).
\]
Since $E$ is simple and $\rho$ unital, $\rho$ is injective. Using the identification of $\mathcal{H}\cong P\mathcal{L}$, we have that
\[
   q_\LL\circ\pi=\tau\oplus\rho
  \]
lifts to the representation \(\pi:E\to B(\mathcal L)\).  Therefore \([\rho]\) is an inverse for \([\tau]\).
\end{proof}

Consider
\[
        C_r^*(\F_2).
\]
Write \(u,v\) for the canonical generators. By Powers' theorem, \(C^*_r(\mathbb{F}_2)\) is a simple $C^*$-algebra \cite{Powers}. Thus every unital representation, and in particular every unital representation into $\mathcal{Q}(\HH)$, is injective.

By a result of Haagerup--Thorbjornsen, we know that
\(\Ext(C^*_r(\F_2))\) is not a group \cite{HaagerupThorbjornsen}.  Thus there exists an injective representation
\[
        \tau:C^*_r(\F_2)\longrightarrow \Q(\HH)
\]
whose class \([\tau]\in\Ext(C^*_r(\F_2))\) is not invertible. \\
Set
\[
        \tau_4:=\id_{M_4}\otimes\tau:M_4(C^*_r(\F_2))\longrightarrow M_4(\Q(\HH))
        \cong \Q(\HH^{\oplus4}).
\]
The amplification $[\tau_4]$
represents the image of \([\tau]\) under the canonical isomorphism
\[
\operatorname{Ext}(C^*_r(\mathbb{F}_2))\cong \operatorname{Ext}(M_4(C^*_r(\mathbb{F}_2))).
\]
In particular, \([\tau_4]\) is non-invertible whenever \([\tau]\) is
non-invertible.

\section{The three-dimensional hyperrigid operator system}

Let \(e_{ij}\) be the standard matrix units of \(M_4(\C)\).  Define
\[
        D=\operatorname{diag}(1,2,3,4)\otimes1_{C^*_r(\mathbb{F}_2)}\in M_4(C^*_r(\mathbb{F}_2))
\]
and
\[
        K=
        \begin{pmatrix}
        0    & u & v & 1\\
        u^*  & 0 & 1 & 1\\
        v^*  & 1 & 0  & 0\\
        1    & 1 & 0 & 0
        \end{pmatrix}
        \in M_4(C^*_r(\mathbb{F}_2))
\]
Thus \(D=D^*\) and \(K=K^*\).  Put
\[
        S=\spanlin\{1,D,K\}\subseteq M_4(C^*_r(\F_2)).
\]

\begin{lemma}
\label{lem:generation}
It holds that
\[
        C^*(S)=M_4(C^*_r(\F_2)).
\]

\end{lemma}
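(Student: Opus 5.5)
The inclusion $C^*(S)\subseteq M_4(C^*_r(\F_2))$ is immediate, since $1$, $D$ and $K$ are elements of $M_4(C^*_r(\F_2))$. For the reverse inclusion it suffices to show that $C^*(S)$ contains every scalar matrix unit $e_{ij}\otimes 1$ and the two elements $e_{11}\otimes u$ and $e_{11}\otimes v$. Indeed, these generate $M_4(\C)\otimes 1$ together with $e_{11}\otimes C^*(u,v)=e_{11}\otimes C^*_r(\F_2)$. Then $e_{ij}\otimes a=(e_{i1}\otimes 1)(e_{11}\otimes a)(e_{1j}\otimes 1)$ lies in $C^*(S)$ for every $a\in C^*_r(\F_2)$, and these elements span $M_4(C^*_r(\F_2))$.

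First we get the diagonal projections. Since $D=\operatorname{diag}(1,2,3,4)\otimes 1$ is selfadjoint with spectrum $\{1,2,3,4\}$, functional calculus in $C^*(S)$ gives $e_{ii}\otimes 1=p_i(D)$, where $p_i$ is the Lagrange interpolation polynomial equal to $1$ at $i$ and to $0$ at the other three points. Compressing $K$ then picks out individual entries: $(e_{ii}\otimes1)K(e_{jj}\otimes1)=e_{ij}\otimes K_{ij}$. In this way we obtain
\[
e_{14}\otimes 1,\quad e_{23}\otimes 1,\quad e_{24}\otimes 1,\quad e_{12}\otimes u,\quad e_{13}\otimes v
\]
in $C^*(S)$, together with their adjoints.

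Next we connect all indices by scalar units. The scalar entries give a path $1-4-2-3$ of scalar matrix units, so
\[
e_{12}\otimes 1=(e_{14}\otimes1)(e_{42}\otimes 1),\qquad e_{13}\otimes1=(e_{12}\otimes 1)(e_{23}\otimes1)
\]
belong to $C^*(S)$. Since the graph on $\{1,2,3,4\}$ with edges $\{1,4\},\{2,4\},\{2,3\}$ is connected, products of these units and their adjoints give all $e_{ij}\otimes 1$. Finally,
\[
e_{11}\otimes u=(e_{12}\otimes u)(e_{21}\otimes 1),\qquad e_{11}\otimes v=(e_{13}\otimes v)(e_{31}\otimes 1).
\]
This completes the argument.

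The main point to get right is that the scalar $1$-entries of $K$ form a spanning tree of $\{1,2,3,4\}$. This is what lets the $u$- and $v$-entries be transported to the $(1,1)$ corner without multiplying by other group elements; everything else is routine.
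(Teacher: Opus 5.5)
Your proof is correct and follows the same route as the paper. The spectral projections of $D$ cut out the entries of $K$; the three scalar entries (the spanning tree $1$--$4$--$2$--$3$) generate all $e_{ij}\otimes 1$; and these units carry $e_{12}\otimes u$ and $e_{13}\otimes v$ into the $(1,1)$ corner. You have only made explicit the ``simple computations'' that the paper leaves to the reader: the Lagrange interpolation for the projections and the concrete products of matrix units.
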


\begin{proof}
Since \(D\) has the four distinct spectral values \(1,\ldots,4\), its spectral
projections
\[
        p_j:=e_{jj}\otimes1_{C^*_r(\F_2)},
        \qquad 1\leq j\leq4,
\]
belong to \(C^*(D)\subseteq C^*(S)\).  Therefore each corner \(p_iKp_j\)
belongs to \(C^*(S)\).  The entries of \(K\) equal to \(1_{C^*_r(\F_2)}\) give
  \[
        e_{14}\otimes1_{C^*_r(\F_2)}, e_{24}\otimes1_{C^*_r(\F_2)}, e_{23}\otimes1_{C^*_r(\F_2)}\in C^*(S).
\]
Simple computations now imply
  \[
    e_{ij}\otimes1_{C^*_r(\F_2)}\in C^*(S)        \qquad 1\leq i,j\leq4.
  \]
The labelled edges from the first row give
\[
        e_{12}\otimes u,
        \quad e_{13}\otimes v
        \in C^*(S).
\]
Multiplying by the already obtained scalar matrix units yields
\[
        e_{11}\otimes u,
        \quad e_{11}\otimes v
        \in C^*(S).
\]
Since \(u,v\) generate \(C^*_r(\F_2)\), and since all scalar matrix units belong to
\(C^*(S)\), we get \(M_4(C^*_r(\F_2))\subseteq C^*(S)\).  The reverse inclusion is
immediate.
\end{proof}

The next theorem is the point at which the present construction differs from
Harris' four-dimensional coding lemma \cite[Lemma 1]{Harris}.  Harris includes
a projection as one of the four generators.  Here the projection data are
recovered from the two endpoint spectral subspaces of \(D\) and propagated
with the help of the structure of \(K\). The argument essentially only uses that the diagonal operator $D$ has distinct eigenvalues, that the non-zero entries of $K$ are partial isometries, and the graph determined by these entries has a specific form. Thus the theorem holds in a more general setting. However, to keep the proof transparent, we restrict to the concrete $D$ and $K$ above.

\begin{theorem}
\label{thm:hyperrigid}
For every unital representation \(\pi:M_4(C^*_r(\F_2))\to\B(\mathcal E)\), the restriction \(\pi|_S\) has the unique extension
property: if \(\Phi:M_4(C^*_r(\mathbb{F}_2))\to\B(\mathcal E)\) is u.c.p. and
\(\Phi|_S=\pi|_S\), then \(\Phi=\pi\).
\end{theorem}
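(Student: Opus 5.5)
Following the strategy announced in the introduction, I take a u.c.p. map \(\Phi\) with \(\Phi|_S=\pi|_S\) and form its minimal Stinespring dilation \(\Phi(x)=V^*\rho(x)V\), where \(\rho:M_4(C^*_r(\F_2))\to\B(\LL)\) is a unital representation and \(V:\mathcal E\to\LL\) an isometry. It suffices to show that the dilation is trivial on the generators of \(C^*(S)\), i.e. \(V\pi(s)=\rho(s)V\) for \(s\in S\). Indeed, then \(VV^*\) commutes with \(\rho(S)\) (apply the identity and its adjoint), hence with \(\rho(C^*(S))=\rho(M_4(C^*_r(\F_2)))\) by Lemma~\ref{lem:generation}. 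So \(V\mathcal E\) is reducing and \(\Phi\) is multiplicative. Since \(\Phi\) agrees with \(\pi\) on the generating set \(S\), this gives \(\Phi=\pi\).

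First I would treat \(D\). Write \(P_j=\pi(p_j)\) and \(R_j=\rho(p_j)\), so that \(\rho(D)=\sum_j jR_j\). Then \(V^*\rho(D)V=\pi(D)\) and \(V^*\rho(D)^2V\ge \pi(D)^2\). The endpoint eigenvalues are extremal: \(V^*(4-\rho(D))V=4-\pi(D)\). Take \(\xi\in P_4\mathcal E\). Then \(\langle(4-\rho(D))V\xi,V\xi\rangle=0\), and since \(4-\rho(D)\ge0\) this forces \(V\xi\in R_4\LL\). Similarly \(VP_1\mathcal E\subseteq R_1\LL\). This handles the two endpoint spectral subspaces.

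Next I would propagate along the graph of \(K\). The edges are \(1\!-\!2\) (label \(u\)), \(1\!-\!3\) (\(v\)), \(1\!-\!4\), \(2\!-\!3\) and \(2\!-\!4\), each labelled by a unitary. I use the intertwining identities \(V^*\rho(K)V=\pi(K)\) and \(V^*\rho(D)\rho(K)V\), obtained by compressing. The finer tool is the Schwarz-type equality argument: for \(\xi\in P_4\mathcal E\), the vector \(\rho(K)V\xi=\rho(e_{14}\otimes1)V\xi+\rho(e_{24}\otimes1)V\xi\) lies in \(R_1\LL\oplus R_2\LL\). Its compression is \(\pi(K)\xi\), which has the same \(P_1,P_2\) components, and the norms match because \(\rho(p_4K^2p_4)=2R_4\) on the range. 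Equality of norms, \(\|V^*\eta\|=\|\eta\|\), forces \(\rho(K)V\xi\in V\mathcal E\). Hence \(\rho(e_{14}\otimes1)V\xi=V\pi(e_{14}\otimes1)\xi\) and likewise for \(e_{24}\), which gives \(V\pi(e_{14}\otimes 1)=\rho(e_{14}\otimes1)V\) on \(P_4\). The same argument from \(P_1\) (neighbours \(2,3,4\), with a \(3\times\) norm identity) handles \(u,v\) and \(e_{14}\). Then I deduce that \(V\) maps \(P_2\mathcal E\) into \(R_2\LL\), using that \(e_{24}\otimes 1\) is a unitary between the corners. Continuing from vertex \(2\) handles \(e_{23}\), and finally \(P_3\). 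This yields \(VP_j=R_jV\), and the corners of \(K\) intertwine, so \(V\pi(s)=\rho(s)V\) for all \(s\in S\).

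The main obstacle is the step where the endpoint information is transported to the middle vertices \(2,3\). There the spectral extremality argument fails, and one must carefully extract the equality case in \(\|V^*\eta\|\le\|\eta\|\) from what is known only about compressions by \(V^*\). I would try first to compare \(\Phi(K^2)\) with \(\pi(K^2)\) on \(P_1\) and \(P_4\): these follow from the endpoint step together with \(\Phi(p_jKp_k)=\pi(p_jKp_k)\), which holds since \(\Phi\) is a \(p\)-bimodule map once \(VP_j=R_jV\) is known at the endpoints. The graph structure must make each middle vertex reachable from an endpoint through a unitary edge.
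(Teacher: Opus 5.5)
Your proposal follows the paper's proof essentially step by step: the Stinespring dilation, extremality of the eigenvalues $1$ and $4$ giving $VP_1\mathcal E\subseteq R_1\LL$ and $VP_4\mathcal E\subseteq R_4\LL$, the norm identity forcing $\rho(K)V\xi=V\pi(K)\xi$, propagation $4\to2\to3$ along unitary edges, and the reducing-subspace conclusion via Lemma~\ref{lem:generation}. The one step you assert rather than prove is the splitting of $\rho(K)V\xi=V\pi(K)\xi$ into corners, which needs a second norm comparison as in the paper (apply $R_2$, the $e_{14}\otimes1$-term vanishes because $VP_1\mathcal E\subseteq R_1\LL$, and $\|\rho(e_{24}\otimes1)V\xi\|=\|\xi\|=\|V\pi(e_{24}\otimes1)\xi\|$ forces $V\pi(e_{24}\otimes1)\xi\in R_2\LL$), and this works only because all but one neighbour of the current vertex are already controlled, so mere reachability is not the right condition, your early vertex-$1$ step cannot yet separate the $u$- and $v$-edges, and at the endpoint stage you have only the inclusions $VP_j\mathcal E\subseteq R_j\LL$, not $VP_j=R_jV$, so the bimodule remark in your last paragraph is unjustified, though also unnecessary.
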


\begin{proof}
Let
\[
        \Phi(b)=V^*\rho(b)V,
        \qquad b\in M_4(C^*_r(\mathbb{F}_2)),
\]
be a Stinespring dilation, where \(\rho:M_4(C^*_r(\mathbb{F}_2))\to\B(\LL)\) is a unital
representation and \(V:\mathcal E\to\LL\) is an isometry.  For
\(1\leq j\leq4\), put
\[
        p_j=e_{jj}\otimes1_{C^*_r(\mathbb{F}_2)}.
\]
Then
\[
        \pi(D)=\sum_{j=1}^4 j\pi(p_j),
        \qquad
        \rho(D)=\sum_{j=1}^4 j\rho(p_j),
\]
and the operators $\pi(p_j)$ and $\rho(p_j)$ are orthogonal selfadjoint projections, possibly zero.\\
Since \(\Phi(D)=\pi(D)\), if \(\xi=\pi(p_1)x\in \pi(p_1)\mathcal E\), then
\[
        0=\left\langle\left(\sum_{j=2}^4(j-1)\pi(p_j)\right)\pi(p_1)x,\xi\right\rangle=\langle(\pi(D)-I)\xi,\xi\rangle
         =\langle(\rho(D)-I)V\xi,V\xi\rangle.
\]
As \(\rho(D)-I=\sum_{j=2}^4(j-1)\rho(p_j)\geq0\), it follows that
  \[
    P_{V(\pi(p_1)\mathcal{E})}(\rho(D)-I)P_{V(\pi(p_1)\mathcal{E})}=0
 \]
and, since $\ker(\rho(D)-I)=\textup{Im}(\rho(p_1))$, we get
  \[
    V(\pi(p_1)\mathcal{E})\subset\ker(\rho(D)-I)=\rho(p_1)\LL.
  \]
Similarly, since $\Phi(D)=\pi(D)$, we have for $\xi=\pi(p_4)x\in\pi(p_4)\mathcal{E}$ that
\[
        0=\left\langle\left(\sum_{j=1}^3(4-j)\pi(p_j)\right)\pi(p_4)x,\xi\right\rangle=\langle(4I-\pi(D))\xi,\xi\rangle
         =\langle(4I-\rho(D))V\xi,V\xi\rangle.
\]
 Using \(4I-\rho(D)=\sum_{j=1}^3(4-j)\rho(p_j)\geq0\), one obtains
  \[
    P_{V(\pi(p_4)\mathcal{E})}(4I-\rho(D))P_{V(\pi(p_4)\mathcal{E})}=0
 \]
 and, since $\ker(4I-\rho(D))=\textup{Im}(\rho(p_4))$, we get
\[
        V(\pi(p_4)\mathcal E)\subseteq \rho(p_4)\LL.
\]
Write
\[
        t_{ij}:=p_iKp_j=e_{ij}\otimes a_{ij}.
\]
Whenever $t_{ij}\neq0$, the element \(a_{ij}\in C^*_r(\mathbb{F}_2)\) is a unitary, and hence \(t_{ij}\) is a partial
isometry with  \(p_j=t_{ij}^*t_{ij}\) and \(p_i=t_{ij}t_{ij}^*\).\\
Fix for a moment a $j\in\{1,2,3,4\}$ and assume that \(V\pi(p_j)\mathcal{E}\subseteq\rho(p_j)\LL\) (we only know this for $j=1,4$ thus far). Then for \(\eta\in \pi(p_j)\mathcal E\), the orthogonality of the \(\rho(p_i)\)'s and the assumption gives
\[
        \|\rho(K)V\eta\|^2
        =
        \left\|\sum_{i=1}^4\rho(t_{ij})V\eta\right\|^2
        =
        \sum_{i=1}^4\left\|\rho(t_{ij})V\eta\right\|^2
        =
        \begin{cases} 2\|\eta\|^2 & j=3,4\\ 3\|\eta\|^2 & j=1,2. \end{cases}
\]
Likewise, we can deduce from the orthogonality of the $\pi(p_i)$'s that
\[
        \|\pi(K)\eta\|^2=\begin{cases} 2\|\eta\|^2 & j=3,4\\ 3\|\eta\|^2 & j=1,2. \end{cases}
\]
Since \(\Phi(K)=\pi(K)\),
\[
        V^*\rho(K)V\eta=\pi(K)\eta.
\]
The contraction \(V^*\) therefore preserves the norm of \(\rho(K)V\eta\), so
\(\rho(K)V\eta\in V\mathcal E\), and hence
\[
        \rho(K)V\eta=V\pi(K)\eta.
\]
Now we show that
\[
        V(\pi(p_2)\mathcal E)\subseteq \rho(p_2)\LL.
\]
By the above norm argument, we have for $\eta\in \pi(p_4)\mathcal{E}$ that
\[
        \rho(K)V\eta=V\pi(K)\eta.
\]
Projecting this equality onto \(\rho(p_2)\LL\) yields
  \[
    \rho(p_2)\rho(K)V\eta=\rho(p_2)V\pi(K)\eta=\rho(p_2)V\pi(K)\pi(p_4)\eta=\rho(p_2)V\pi(t_{14}+t_{24})\eta.
  \]
Using
\[
        V(\pi(p_4)\mathcal E)\subseteq \rho(p_4)\LL 
\]
the left hand side changes to  
  \[ 
    \rho(p_2)\rho(K)V\eta=\rho(t_{21}+t_{23}+t_{24})V\eta=\rho(t_{24})V\eta, 
  \]
while using
  \[
    V(\pi(p_1)\mathcal{E})\subseteq\rho(p_1)\LL
  \]
the right hand side changes to
  \[
    \rho(p_2)V\pi(t_{14}+t_{24})\eta=\rho(p_2)V\pi(t_{24})\eta.
  \]
So we obtain
\[
        \rho(t_{24})V\eta=\rho(p_2)V\pi(t_{24})\eta.
\]
The left hand side has norm \(\|\eta\|\), while
\(V\pi(t_{24})\eta\) also has norm \(\|\eta\|\).  Thus
\[
        V\pi(t_{24})\eta\in \rho(p_2)\LL.
\]
As \(\pi(t_{24})\) maps \(\pi(p_4)\mathcal E\) onto \(\pi(p_2)\mathcal E\), the claim
follows.\\
Analogously we obtain
\[
        V(\pi(p_3)\mathcal E)\subseteq \rho(p_3)\LL.
\]
Indeed, take \(\eta\in \pi(p_2)\mathcal E\). The norm argument implies that
\[
        \rho(K)V\eta=V\pi(K)\eta.
\]
Projecting this equality onto \(\rho(p_3)\LL\) yields
  \[
    \rho(p_3)\rho(K)V\eta=\rho(p_3)V\pi(K)\eta=\rho(p_3)V\pi(K)\pi(p_2)\eta=\rho(p_3)V\pi(t_{12}+t_{32}+t_{42})\eta.
  \]
Using
\[
        V(\pi(p_2)\mathcal E)\subseteq \rho(p_2)\LL
\]
the left hand side changes to  
  \[ 
    \rho(p_3)\rho(K)V\eta=\rho(t_{31}+t_{32})V\eta=\rho(t_{32})V\eta, 
  \]
while using
  \[
    V(\pi(p_1)\mathcal{E})\subseteq\rho(p_1)\LL,\qquad V(\pi(p_4)\mathcal E)\subseteq \rho(p_4)\LL
  \]
the right hand side changes to
  \[
    \rho(p_3)V\pi(t_{12}+t_{32}+t_{42})\eta=\rho(p_3)V\pi(t_{32})\eta.
  \]
So we obtain
\[
        \rho(t_{32})V\eta=\rho(p_3)V\pi(t_{32})\eta.
\]
The left hand side has norm \(\|\eta\|\), while
\(V\pi(t_{32})\eta\) also has norm \(\|\eta\|\).  Thus
\[
        V\pi(t_{32})\eta\in \rho(p_3)\LL.
\]
As \(\pi(t_{32})\) maps \(\pi(p_2)\mathcal E\) onto \(\pi(p_3)\mathcal E\), the claim
follows.\\
Hence $V(\pi(p_i)\mathcal E)\subseteq \rho(p_i)\LL$ for $i=1,2,3,4$. Since $I=\sum_{i=1}^4\pi(p_i)$, it follows that 
  \[
    \begin{split}
\rho(D)V
&=
VV^*\rho(D)V+(I-VV^*)\rho(D)V\\
&=
V\pi(D)+\sum_{j=1}^4(I-VV^*)\rho(D)V\pi(p_j)\\
&=
V\pi(D)+\sum_{j=1}^4(I-VV^*)jV\pi(p_j)\\
&=
V\pi(D).  
  \end{split}
  \]
Moreover we now know that the norm argument holds for $j=1,2,3,4$ and hence
\[
        \rho(K)V\eta=V\pi(K)\eta
\]
for each $\eta\in\pi(p_j)\mathcal{E}$ and $j=1,2,3,4$. Using $I=\sum_{j=1}^4\pi(p_j)$ again, we get 
\[
        \rho(K)V\eta=V\pi(K)\eta\qquad \forall\eta\in\mathcal{E}.
\]
Thus \(V\mathcal E\) is invariant for \(\rho(D)\) and \(\rho(K)\).  Since both
\(D\) and \(K\) are selfadjoint, \(V\mathcal E\) is reducing for
\(\rho(D)\) and \(\rho(K)\).  By Lemma~\ref{lem:generation},
\(C^*(D,K)=M_4(C^*_r(\mathbb{F}_2))\), so \(V\mathcal E\) is reducing for \(\rho(M_4(C^*_r(\mathbb{F}_2)))\).  Therefore
\(\Phi\) is a \(*\)-homomorphism.  Since it agrees with \(\pi\) on the
generators \(D,K\), it agrees with \(\pi\) on all of \(M_4(C^*_r(\mathbb{F}_2))\).
\end{proof}

\section{Failure of the Smith--Ward property}

Let
\[
        \dot S:=\tau_4(S)\subseteq \Q(\HH^{\oplus4}).
\]
Since \(\tau_4\) is injective, \(\dot S\) is a three-dimensional operator system.

\begin{proposition}
\label{prop:no-lift}
The identity map
\[
        \id_{\dot S}:\dot S\longrightarrow\Q(\HH^{\oplus4})
\]
has no u.c.p. lift to \(\B(\HH^{\oplus4})\).
\end{proposition}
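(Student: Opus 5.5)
We argue by contradiction. Suppose $\psi:\dot S\to\B(\HH^{\oplus4})$ is a u.c.p. map with $q\circ\psi=\id_{\dot S}$. Composing with $\tau_4|_S$ gives a u.c.p. map
\[
        \phi:=\psi\circ\tau_4|_S:S\longrightarrow\B(\HH^{\oplus4}),\qquad q\circ\phi=\tau_4|_S .
\]
By Arveson's extension theorem, $\phi$ extends to a u.c.p. map $\sigma:M_4(C^*_r(\F_2))\to\B(\HH^{\oplus4})$. The aim is to show that $q\circ\sigma=\tau_4$ on all of $M_4(C^*_r(\F_2))$. Granting this, $\sigma$ is a u.c.p. lift of the unital representation $\tau_4$. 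Since $M_4(C^*_r(\F_2))$ is unital, simple (by Powers' theorem) and separable, Lemma~\ref{lem:ucp-lift-invertible} then forces $[\tau_4]$ to be invertible in $\Ext(M_4(C^*_r(\F_2)))$. This contradicts the choice of $\tau$, because $[\tau_4]$ corresponds to the non-invertible class $[\tau]$.

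The key step is to show $q\circ\sigma=\tau_4$, and this is where Theorem~\ref{thm:hyperrigid} is used. The map $\Phi:=q\circ\sigma$ is u.c.p. from $M_4(C^*_r(\F_2))$ into $\Q(\HH^{\oplus4})$ and agrees with $\tau_4$ on $S$. To place ourselves in the setting of the theorem, we represent the Calkin algebra faithfully on a Hilbert space $\mathcal E$, say via a faithful representation $\theta:\Q(\HH^{\oplus4})\to\B(\mathcal E)$ (for example the universal representation; separability of $\mathcal E$ is not needed). Then:
\begin{itemize}
\item $\pi:=\theta\circ\tau_4$ is a unital representation of $M_4(C^*_r(\F_2))$ on $\mathcal E$;
\item $\theta\circ\Phi$ is u.c.p. and agrees with $\pi$ on $S$.
\end{itemize}
Theorem~\ref{thm:hyperrigid} gives $\theta\circ\Phi=\pi=\theta\circ\tau_4$, and injectivity of $\theta$ yields $\Phi=\tau_4$.

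The main point to watch is bookkeeping rather than analysis. One must check that the extension really agrees with $\tau_4$ on $S$, which holds because $q\circ\sigma|_S=q\circ\psi\circ\tau_4|_S=\tau_4|_S$. One must also check that the hypotheses of Lemma~\ref{lem:ucp-lift-invertible} are met: $\HH^{\oplus4}$ is separable, and $\tau_4$ is a unital representation into $\Q(\HH^{\oplus4})\cong M_4(\Q(\HH))$.

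The essential input is the unique extension property from Theorem~\ref{thm:hyperrigid}. It upgrades a lift that is only defined on the three-dimensional system to a u.c.p. lift of the whole representation. The Ext obstruction then finishes the argument.
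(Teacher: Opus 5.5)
Your proposal is correct and follows essentially the same route as the paper. Both arguments take the Arveson extension of the lift composed with $\tau_4|_S$ and apply Theorem~\ref{thm:hyperrigid} after a faithful representation of the Calkin algebra to obtain a u.c.p.\ lift of $\tau_4$. Lemma~\ref{lem:ucp-lift-invertible} then contradicts the non-invertibility of $[\tau_4]$.
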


\begin{proof}
Assume that \(\sigma:\dot S\to\B(\HH^{\oplus4})\) is u.c.p. and satisfies
\[
        q\circ\sigma=\id_{\dot S}.
\]
Identifying \(S\) with \(\dot S\) via \(\tau_4|_S\), we get a u.c.p. map
\[
        \sigma_0:S\longrightarrow\B(\HH^{\oplus4})
\]
such that
\[
        q\circ\sigma_0=\tau_4|_S.
\]
By Arveson's extension theorem, \(\sigma_0\) extends to a u.c.p. map
\[
        \widetilde\sigma:M_4(C^*_r(\mathbb{F}_2))\longrightarrow\B(\HH^{\oplus4}).
\]
Then \(q\circ\widetilde\sigma:M_4(C^*_r(\mathbb{F}_2))\to\Q(\HH^{\oplus4})\) is a u.c.p. extension of
\(\tau_4|_S\).  Composing with a faithful representation of the Calkin algebra
and using Theorem~\ref{thm:hyperrigid}, we obtain
\[
        q\circ\widetilde\sigma=\tau_4
\]
on all of \(M_4(C^*_r(\mathbb{F}_2))\).  Thus \(\tau_4\) has a u.c.p. lift.  Lemma~\ref{lem:ucp-lift-invertible}
then implies that \([\tau_4]\) is invertible in \(\Ext(M_4(C^*_r(\mathbb{F}_2)))\), contradicting the
choice of \(\tau\).  Hence no such \(\sigma\) exists.
\end{proof}
\ \\

Now choose arbitrary selfadjoint lifts \(T_1,T_2\in\B(\HH^{\oplus4})\) of
\(\tau_4(D)\) and \(\tau_4(K)\), respectively:
\[
        q(T_1)=\tau_4(D),
        \qquad
        q(T_2)=\tau_4(K).
\]

\begin{theorem}
\label{thm: main}
For \(T=T_1+iT_2\), there is no compact operator
\(L\in\K(\HH^{\oplus4})\) such that
\[
        \mathcal W(T+L)
        =
        \mathcal W(q(T)).
\]
\end{theorem}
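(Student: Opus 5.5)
We argue by contradiction. Suppose $L\in\K(\HH^{\oplus4})$ satisfies $\mathcal W(T+L)=\mathcal W(q(T))$. The plan is to turn this equality into a u.c.p.\ lift of $\id_{\dot S}$, which Proposition~\ref{prop:no-lift} rules out.

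First we pass from the single operator $T$ to the selfadjoint pair. We regard $\mathcal W$ of a single operator $X$ as the joint matrix range of the pair $(\operatorname{Re}X,\operatorname{Im}X)$; equivalently, $\phi(X)$ determines $\phi(X^*)=\phi(X)^*$ for u.c.p.\ $\phi$. Write $T+L=(T_1+L_1)+i(T_2+L_2)$ with $L_1=\operatorname{Re}L$ and $L_2=\operatorname{Im}L$, both compact and selfadjoint. Since $q(T)=\tau_4(D)+i\tau_4(K)$, the operator systems are
\[
\spanlin\{1,T+L,(T+L)^*\}=\spanlin\{1,T_1+L_1,T_2+L_2\},
\qquad
\spanlin\{1,q(T),q(T)^*\}=\dot S .
\]

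Next we apply the matrix-range duality \cite[Theorem 5.1]{DDSS17} recalled in the Preliminaries. The equality of full matrix ranges gives a unital complete order isomorphism
\[
\psi:\dot S\to\spanlin\{1,T_1+L_1,T_2+L_2\},\qquad \tau_4(D)\mapsto T_1+L_1,\quad \tau_4(K)\mapsto T_2+L_2 .
\]
We need only the direction $\mathcal W(q(T))\subseteq\mathcal W(T+L)$, which yields complete positivity of $\psi$. Composing with the inclusion into $\B(\HH^{\oplus4})$, we set $\sigma:=\psi$ as a map into $\B(\HH^{\oplus4})$. This map is u.c.p.

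We now check that $\sigma$ is a lift. We have
\[
q(\sigma(\tau_4(D)))=q(T_1+L_1)=q(T_1)=\tau_4(D),
\]
and likewise $q(\sigma(\tau_4(K)))=\tau_4(K)$. Also $q(\sigma(1))=1$. By linearity, $q\circ\sigma=\id_{\dot S}$.

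This contradicts Proposition~\ref{prop:no-lift}, so no such $L$ exists.

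The only delicate point is the bookkeeping in the duality step. We must make sure the correspondence between the single-operator matrix range and the matrix range of the selfadjoint pair is correct. We must also make sure the map obtained goes in the right direction, namely from $\dot S$ into $\B(\HH^{\oplus4})$. With these checked, the remaining steps are routine.
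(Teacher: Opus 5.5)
Your proof is correct and follows the paper's argument: split $T+L=(T_1+L_1)+i(T_2+L_2)$, use matrix-range duality to get the u.c.p.\ map $\dot S\to\B(\HH^{\oplus4})$ with $\tau_4(D)\mapsto T_1+L_1$ and $\tau_4(K)\mapsto T_2+L_2$, observe that it lifts $\id_{\dot S}$, and contradict Proposition~\ref{prop:no-lift}. However, your remark on which inclusion is needed has the direction reversed. The inclusion $\mathcal W(q(T))\subseteq\mathcal W(T+L)$ holds for every compact $L$, because $q$ is u.c.p.\ and sends $T+L$ to $q(T)$; it only gives complete positivity of $q$. Complete positivity of $\psi$ comes from the nontrivial inclusion $\mathcal W(T+L)\subseteq\mathcal W(q(T))$. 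This is harmless here only because you assume equality, which supplies both inclusions.
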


\begin{proof}
Suppose that such a compact perturbation exists and let $L_1=\textup{Re}(L), L_2=\textup{Im}(L)$.  Since
\[
        q(T_1+L_1)=q(T_1)=\tau_4(D),
        \qquad
        q(T_2+L_2)=q(T_2)=\tau_4(K),
\]
the equality of full matrix ranges implies that the unital map
\[
        \dot S\longrightarrow\B(\HH^{\oplus4})
\]
defined by
\[
        1\mapsto I,
        \qquad
        \tau_4(D)\mapsto T_1+L_1,
        \qquad
        \tau_4(K)\mapsto T_2+L_2
\]
is completely positive, see \cite{Kavruk}. Since this is a u.c.p. lift of \(\id_{\dot S}\), this contradicts
Proposition~\ref{prop:no-lift}.
\end{proof}

Combining this with \cite[Theorem 6.6]{Kavruk}, we obtain:

\begin{corollary}
The operator system dual $(\dot S)^d$ is a three-dimensional operator system that is not exact.
\end{corollary}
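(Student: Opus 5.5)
The corollary should follow by feeding Proposition~\ref{prop:no-lift} into Kavruk's characterization, \cite[Theorem 6.6]{Kavruk}. That theorem links exactness of the dual of a finite-dimensional operator system to the lifting property of the system itself: a finite-dimensional operator system $\mathcal S$ has the lifting property exactly when $\mathcal S^d$ is exact. So the plan is to exhibit one failure of the lifting property for $\dot S$, namely the non-liftable map $\id_{\dot S}$, and then pass to the dual.

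First I would record the basic facts about $\dot S$ and its dual. $\dot S$ is a finite-dimensional operator system, and it is three-dimensional because $\tau_4$ is injective, as noted before Proposition~\ref{prop:no-lift}. Its dual $(\dot S)^d$ is therefore an operator system (Choi--Effros) of the same dimension three. Its order unit is a faithful state; existence of such a state is standard for finite-dimensional operator systems.

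Second, I would check that Proposition~\ref{prop:no-lift} really violates the lifting property in the sense Kavruk uses. That lifting property asks that every u.c.p. map $\mathcal S\to A/I$ into a quotient $C^*$-algebra lift to a u.c.p. map into $A$. Here we take $A=\B(\HH^{\oplus4})$, the ideal $I=\K(\HH^{\oplus4})$, and the u.c.p. map $\id_{\dot S}\colon\dot S\to\Q(\HH^{\oplus4})$. Proposition~\ref{prop:no-lift} says this map has no u.c.p. lift, so $\dot S$ fails the lifting property.

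Finally, applying \cite[Theorem 6.6]{Kavruk} gives that $(\dot S)^d$ is not exact. The only delicate point, and the main obstacle, is matching hypotheses and conventions. One has to confirm which direction of duality Kavruk's theorem states, and that no extra condition, such as a particular choice of faithful state as order unit, is needed. Since any two faithful-state choices give completely order isomorphic duals, and exactness is invariant under complete order isomorphism, this should cause no trouble.
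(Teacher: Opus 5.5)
Your proposal is correct and is essentially the paper's argument: the corollary is obtained by feeding the failure of the lifting property for $\dot S$ into \cite[Theorem 6.6]{Kavruk}. That failure is witnessed by the non-liftable u.c.p. map $\id_{\dot S}\colon\dot S\to\Q(\HH^{\oplus4})$ of Proposition~\ref{prop:no-lift}. On the direction issue you raise: if Kavruk's theorem is phrased as ``$\mathcal S$ exact iff $\mathcal S^d$ has the lifting property'', apply it to $\mathcal S=(\dot S)^d$ and use the canonical complete order isomorphism $(\dot S)^{dd}\cong\dot S$ for finite-dimensional operator systems.
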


\section*{Acknowledgement}
The author thanks Orr Shalit for careful proofreading, which led to an improvement in the readability of the paper.
\ \\
\ \\

\textbf{AI Disclosure:} ChatGPT was used to perform literature search and to accelerate the search for the operator system $S$.

Technion Israel Institute of Technology, Technion City, Haifa, 3200003, Israel

\textit{Email address:} scherer@campus.technion.ac.il\ ,\ scherer@math.uni-sb.de

\end{document}